\newcommand{\bN}{\mathbf{N}}
\newcommand{\bK}{\mathbf{K}}
\newcommand{\ord}{\mbox{\rm ord}}
\newlength{\szer}
\newtheorem{defi}{Definition}[section]
\newtheorem{nota}[defi]{Remark}
\newtheorem{ejemplo}[defi]{Example}
\newtheorem{teorema}[defi]{Theorem}
\newtheorem{prop}[defi]{Proposition}
\newtheorem{lema}[defi]{Lemma}
\newenvironment{proof}[1][Proof]{\textbf{#1.} }{\
\rule{0.5em}{0.5em}}
\begin{document}
\title{Semigroups corresponding to branches at infinity of coordinate lines in the affine plane
\footnotetext{
     \noindent   \begin{minipage}[t]{4in}
       {\small
       2000 {\it Mathematics Subject Classification:\/} Primary 14R10;
       Secondary 32S05.\\
       Key words and phrases: branch at infinity, semigroup, cha\-rac\-teristic sequence, polynomial  automorphism, Abhyankar-Moh inequality.\\
       The first-named author was partially supported by the Spanish Project
       MTM2012-36917-C03-01.}
       \end{minipage}}}

\author{Evelia R.\ Garc\'{\i}a Barroso,  Janusz Gwo\'zdziewicz and Arkadiusz P\l oski}

\maketitle

\begin{abstract}
\noindent We characterize in terms of characteristic sequences the semigroups corresponding to branches at
infinity of plane affine curves $\Gamma$ for which there exists a polynomial automorphism mapping $\Gamma$ onto the axis $x=0$.
\end{abstract}

\section*{Introduction}
\noindent Let $\bK$ be an algebraically closed field of arbitrary characteristic and let $\gamma$, $\gamma',\cdots$ be plane algebroid branches centered at a point $O$ of an algebraic nonsingular surface defined over $\bK$. The semigroup
$G(\gamma)$ of the branch $\gamma$ is a subsemigroup of $\bN$ consisting of $0$ and all intersection numbers $i(\gamma,\gamma')$, where $\gamma'$ varies over all algebroid curves not having $\gamma$ as a component. We have $\min (G(\gamma)\backslash\{0\})=\ord \,\gamma$ (the order (multiplicity) of the branch $\gamma$).

\noindent The semigroups of plane branches can be characterized in terms of sequences of genera\-tors. A sequence of positive integers $(r_0,\ldots,r_h)$ is said to be a {\em characteristic sequence} if it satisfies the following two axioms:

\begin{enumerate}
\item [$(\mathbf1)$] Set $d_k=\gcd(r_0,\ldots,r_{k-1})$ for $1\leq k \leq h+1$. Then $d_k>d_{k+1}$ for $1\leq k \leq h$ and
$d_{h+1}=1$.
\item [$(\mathbf2)$] $d_kr_k<d_{k+1}r_{k+1}$ for $1\leq k <h$.
\end{enumerate}

\noindent We call $r_0$ the {\em initial term} of the characteristic sequence $(r_0,\ldots,r_h)$.

\noindent Let $G=r_0\bN+\cdots+r_h\bN$ be the semigroup generated by a characteristic sequence. Then 
$r_k=\min(G\backslash(r_0\bN+\cdots+r_{k-1}\bN))$ for $1\leq k \leq h$ which shows that $G$ and $r_0$ determine the sequence $(r_0,\ldots,r_h)$.

\medskip
\noindent {\bf Bresinsky-Angerm\"uller Semigroup Theorem}
\begin{enumerate}
\item Let $\gamma,\lambda$ be a pair of branches, where $\lambda$ is nonsingular. Let $n=i(\gamma,\lambda)<+\infty$. Then the semigroup $G(\gamma)$ of the branch $\gamma$ is gene\-ra\-ted by a characteristic sequence with initial term $n$.
\item Let $G\subseteq \bN$ be a semigroup gene\-ra\-ted by a characteristic sequence with initial term $n>0$. Then there exists a pair of branches $\gamma,\lambda$, where $\lambda$ is a nonsingular branch such that $i(\gamma,\lambda)=n$ and $G(\gamma)=G$.
\end{enumerate}

\noindent The above theorem was proved in \cite{Bresinsky} (for char$\,\bK=0$), \cite{Angermuller} and \cite{Garcia} (for arbitrary characteristic) for the transversal case: $i(\gamma, \lambda)=\ord \,\gamma$. A characteristic-blind proof of the theorem for arbitrary pairs $\gamma, \lambda$ with  $\lambda\neq \gamma $ nonsingular is given in \cite{GBP}.

\medskip

\noindent It will be convenient to regard
$\bK^2$ as the projective plane $\mathbf P\bK^2$ without the line at infinity $L$. Let $\Gamma\subset \bK^2$ be an affine irreducible curve. We say that $\Gamma$ {\em has one branch at infinity} if its projective closure $\overline{\Gamma}$ 
intersects $L$ at only one point  $O$ and $\overline{\Gamma}$ has only one branch centered at this point. 

\medskip

\noindent Let $\lambda$ be the branch of the line at infinity $L$ centered at $O$.

\medskip

\noindent By the Bresinsky-Angerm\"uller Theorem there exists a (unique) characte\-ristic sequence $(r_0,\ldots,r_h)$ generating $G(\gamma)$ with initial term $r_0=i(\gamma,\lambda)=\deg \Gamma$. We call $(r_0,\ldots,r_h)$ the {\em characteristic of} $\Gamma$ {\em at infinity}.

\medskip

\noindent The following result is of fundamental importance to studying the plane affine curves with one branch at infinity.

\medskip

\noindent {\bf Abhyankar-Moh inequality} \\ Assume that $\Gamma$ is an affine irreducible curve of degree greater than $1$ with one branch at infinity and let $(r_0,\ldots,r_h)$ be the characteristic of $\Gamma$ at infinity. Suppose that $\gcd(\deg \Gamma, \ord \,\gamma)\not\equiv 0$ (mod char$\,\bK$). Then 
\begin{enumerate}
\item[$(\mathbf3)$] $d_hr_h<r_0^2$.
\end{enumerate}

\medskip

\noindent The condition $\gcd(\deg \Gamma, \ord \,\gamma)\not\equiv 0$ (mod char$\,\bK$) is automatically satisfied for char$\,\bK=0$ and is essential if char$\,\bK\neq 0$.  In \cite{Abhyankar-Moh} the Abhyankar-Moh inequality is formulated in terms of Laurent-Puiseux parametrizations of the branch $\gamma$ (see also \cite{Kang}). For the formulation given above we refer the reader to \cite{Russell} and \cite{GBP}.

\medskip

\noindent {\bf  Conductor Formula}\\
\noindent Let $\Gamma$ be an affine irreducible curve of degree greater than $1$, rational, nonsingular with one branch at infinity. Let $(r_0,\ldots,r_h)$ be the characteristic of $\Gamma$ at infinity.  Then
\begin{enumerate}
\item [$(\mathbf4)$] $\displaystyle \sum_{k=1}^h \left( \frac{d_k}{d_{k+1}}-1\right)r_k=(r_0-1)^2$.
\end{enumerate}

\noindent The Conductor Formula is a corollary to the genus formula for a plane curve in terms of its singularities.
In \cite{Abhyankar-Moh}  it is formulated in terms of Laurent-Puiseux parametrizations of the branch at infinity. 

\medskip

\noindent The aim of this note is to characterize the semigroups of nonnegative integers generated by the sequences satisfying the properties $(\mathbf 1)$-$(\mathbf 4)$. Our main result is a counterpart of the Bresinsky-Angerm\"uller Semigroup Theorem. We will not impose any restriction on the characteristic of $\bK$. The above quoted results gave motivation for writing this paper but will be not used in our proofs.

\section{Result}

\noindent A sequence of positive integers $(r_0,\ldots,r_h)$ will be called an Abhyankar-Moh characteristic sequence if it has properties $(\mathbf 1)$-$(\mathbf 4)$ as in the Introduction. The following lemma is due to \cite{BGBP}.

\begin{lema} $\;$
\label{Bar}
\begin{enumerate}
\item [(i)] Let $(d_1,\ldots,d_{h+1})$ be a sequence of integers such that $d_1>\cdots > d_{h+1}=1$ and $d_{k+1}$ divides $d_k$ for $1\leq k\leq h$. Then the sequence $(r_0, r_1,\ldots, r_h)$ defined by $r_0=d_1$, 
$r_k=\frac{d_1^2}{d_k}-d_{k+1}$ for $1\leq k\leq h$ is an Abhyankar-Moh characteristic sequence with
$\gcd(r_0,\ldots,r_{k-1})=d_k$ for $1\leq k\leq h+1$.
\item [(ii)]  Let $(r_0, r_1,\ldots, r_h)$ be an Abhyankar-Moh characteristic sequence and let $d_k=\gcd(r_0,\ldots,r_{k-1})$ for $1\leq k\leq h+1$. Then $r_k=\frac{d_1^2}{d_k}-d_{k+1}$ for $1\leq k\leq h$.
\end{enumerate}
\end{lema}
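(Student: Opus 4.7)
My plan is to dispose of both parts of the lemma by direct verification, with part (ii) requiring a squeeze argument that combines divisibility with the Conductor Formula.

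\textbf{Part (i).} I would verify the four axioms $(\mathbf1)$--$(\mathbf4)$ in order for the sequence $r_k = d_1^2/d_k - d_{k+1}$. For $(\mathbf1)$, induct on $k$: the step reduces to showing $\gcd(d_{k-1}, r_{k-1}) = d_k$, which follows because $d_{k-1} \mid d_1$ lets me write $d_1^2/d_{k-1} = d_{k-1} e^2$ (with $e = d_1/d_{k-1}$), so $r_{k-1} \equiv -d_k \pmod{d_{k-1}}$ and $\gcd(d_{k-1},d_k)=d_k$. For $(\mathbf2)$, compute $d_k r_k = d_1^2 - d_k d_{k+1}$, which is strictly increasing in $k$ because $d_k d_{k+1}$ strictly decreases. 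For $(\mathbf3)$, simply $d_h r_h = d_1^2 - d_h < d_1^2 = r_0^2$. For $(\mathbf4)$, a telescoping computation gives $(d_k/d_{k+1}-1)\,r_k = d_1^2/d_{k+1} - d_1^2/d_k - d_k + d_{k+1}$, which collapses under summation to $d_1^2(1-1/d_1) + (1-d_1) = (d_1-1)^2$.

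\textbf{Part (ii).} Set $r_k' := d_1^2/d_k - d_{k+1}$, the target value. Chaining axioms $(\mathbf2)$ and $(\mathbf3)$ gives $d_k r_k < d_1^2$ for every $1 \leq k \leq h$, hence $r_k < d_1^2/d_k$. Since $d_{k+1} = \gcd(r_0,\ldots,r_k)$ divides $r_k$, and a short computation using $d_{k+1} \mid d_k \mid d_1$ shows that $d_{k+1}$ also divides $d_1^2/d_k$, the positive integer $d_1^2/d_k - r_k$ is a multiple of $d_{k+1}$, so $r_k \leq d_1^2/d_k - d_{k+1} = r_k'$. Now apply the Conductor Formula $(\mathbf4)$: the weighted sum with the $r_k$ equals $(r_0-1)^2$, while by part (i) the same weighted sum with the $r_k'$ also equals $(d_1-1)^2$. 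Since the coefficients $d_k/d_{k+1}-1$ are strictly positive and $r_k \leq r_k'$ term by term, equality is forced in every summand, giving $r_k = r_k'$.

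The step I expect to require the most care is the integer-gap argument closing part (ii): passing from the strict real inequality $r_k < d_1^2/d_k$ to the sharp bound $r_k \leq d_1^2/d_k - d_{k+1}$ relies essentially on the twin divisibilities $d_{k+1} \mid r_k$ and $d_{k+1} \mid d_1^2/d_k$. Without them, the Conductor Formula could not be invoked to pinch the inequalities into equalities.
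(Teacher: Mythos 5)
Your proposal is correct and follows essentially the same route as the paper: for part (ii) the paper's integer $q_k=\frac{n^2}{d_kd_{k+1}}-\frac{r_k}{d_{k+1}}\geq 1$ is exactly your twin-divisibility gap argument giving $r_k\leq \frac{d_1^2}{d_k}-d_{k+1}$, and the paper then pinches with the same weighted zero-sum derived from the Conductor Formula. Part (i) is left as ``a simple calculation'' in the paper, and your verification of axioms $(\mathbf1)$--$(\mathbf4)$ supplies it correctly.
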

\begin{proof} A simple calculation gives the proof of $(i)$. To check $(ii)$ let $q_k=\frac{n^2}{d_kd_{k+1}}-\frac{r_k}{d_{k+1}}$ for $1\leq k\leq h$. Then $q_k$ is an integer and $q_k=\frac{n^2-d_kr_k}{d_kd_{k+1}}\geq \frac{n^2-d_hr_h}{d_kd_{k+1}}> 0$. Hence $q_k\geq 1$ and $\frac{n^2}{d_k}-r_k=d_{k+1}q_k\geq d_{k+1}$, which implies 
\begin{equation}
\label{eee0}
\frac{n^2}{d_k}-d_{k+1}-r_k\geq 0 \;\;\hbox{\rm for } 1\leq k\leq h.
\end{equation}

\noindent On the other hand
\begin{eqnarray}
& &\;\;\; \sum_{k=1}^h\left(\frac{d_k}{d_{k+1}}-1\right)
\left(\frac{n^2}{d_k}-d_{k+1}-r_k\right) \nonumber \\
& & = \sum_{k=1}^h\left(\frac{d_k}{d_{k+1}}-1\right)\left(\frac{n^2}{d_k}-d_{k+1}\right)
-\sum_{k=1}^h\left(\frac{d_k}{d_{k+1}}-1\right)r_k \nonumber \\
& &=(n-1)^2-(n-1)^2=0. \label{eee1}
\end{eqnarray}

\noindent Combining $(\ref{eee0})$ and $(\ref{eee1})$ we get $r_k=\frac{n^2}{d_k}-d_{k+1}$ for $1\leq k\leq h$.
\end{proof}

\medskip

\noindent An affine curve $\Gamma\subset \bK^2$ will be called a {\em coordinate line in the affine plane} (in short: {\em coordinate line}) if there exists a polynomial automorphism $(f,g):\bK^2\longrightarrow \bK^2$ such that $f=0$ is the minimal equation of $\Gamma$.

\medskip

\noindent Every coordinate line  is an {\em embedded line} that is an affine curve biregular to an affine line $\bK$  but the converse is not true if char$\,\bK\neq 0$ (see (\cite{Nagata}). Embedded lines are nonsingular, rational, with one branch at infinity.

\begin{ejemplo}
Let $\Gamma$ be a graph of a polynomial in one variable of degree $n>1$. Then $\Gamma$ is the coordinate line. If $\gamma$ is the unique branch at infinity of $\Gamma$ then $G(\gamma)=n\bN+(n-1)\bN$.
\end{ejemplo}

\noindent The main result of this note is

\begin{teorema} $\;$
\label{main}
\begin{enumerate}
\item Let $\Gamma$ be a coordinate line of degree $n>1$ with the branch at infinity $\gamma$. Then $G(\gamma)$ is generated by an Abhyankar-Moh characteristic sequence with initial term $n$.
\item Let $G\subseteq \bN$ be a semigroup generated by an Abhyankar-Moh characteristic sequence with initial term $n>1$. Then there exists a coordinate line $\Gamma$ of degree $n$ with the branch at infinity $\gamma$ such that $G(\gamma)=G$.
\end{enumerate}
\end{teorema}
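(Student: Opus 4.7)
A coordinate line $\Gamma$ is biregular to the affine line, so in particular it is nonsingular, rational, and has one branch at infinity. Properties $(\mathbf{1})$ and $(\mathbf{2})$ of the characteristic of $\Gamma$ at infinity then follow from the Bresinsky--Angerm\"uller Semigroup Theorem applied to $(\gamma,\lambda)$ with $r_0=i(\gamma,\lambda)=\deg\Gamma=n$, while $(\mathbf{4})$ is the Conductor Formula. The genuine content is property $(\mathbf{3})$: the general Abhyankar--Moh inequality requires a gcd hypothesis on $\deg\Gamma$ and $\ord\,\gamma$ that can fail in positive characteristic, so I would not invoke it directly. Instead I would use the Jung--van der Kulk structure theorem to decompose the automorphism $(f,g)$ as a product of affine and triangular automorphisms, and induct on the number of nontrivial triangular factors. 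Affine changes of coordinates do not affect $G(\gamma)$; for a triangular step $(x,y)\mapsto(x,y+P(x))$ one writes down an explicit Laurent--Puiseux parametrization of the image branch at infinity and verifies by direct calculation that $(\mathbf{3})$ is stable under the transformation.

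\textbf{Part 2.} I would argue by induction on $h$. The case $h=1$ is essentially forced: from $(\mathbf{1})$ we have $d_1=n$, $d_2=1$, and Lemma~\ref{Bar}(i) (or equivalently $(\mathbf{4})$) gives $r_1=n-1$, so $G=n\bN+(n-1)\bN$ is realized by $\Gamma=\{y-x^n=0\}$, which is a coordinate line via the automorphism $(x,y)\mapsto(y-x^n,x)$, and whose branch at infinity has semigroup $n\bN+(n-1)\bN$. For the inductive step I would use Lemma~\ref{Bar} to replace the data of an Abhyankar--Moh characteristic sequence by its divisor chain $(d_1,\ldots,d_{h+1})$, construct a shorter chain by collapsing two consecutive divisors, and check that the resulting shorter sequence is again Abhyankar--Moh. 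By the induction hypothesis this shorter sequence is realized by some coordinate line $\Gamma'$, and I would then apply a carefully chosen elementary automorphism of $\bK^2$ to $\Gamma'$ so that the branch at infinity of the resulting curve $\Gamma$ picks up exactly the missing generator.

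\textbf{Main obstacles.} In Part 1 the hard step is establishing $(\mathbf{3})$ in positive characteristic for coordinate lines without any gcd assumption: this is precisely the place where the hypothesis ``coordinate line'' (rather than merely ``embedded line'', where Nagata-type counterexamples exist) must be exploited, and the Jung--van der Kulk decomposition is what lets one use it inductively. In Part 2 the delicate step is arranging the elementary automorphism so that the branch at infinity of $\Gamma$ has \emph{exactly} the prescribed characteristic sequence; this requires computing intersection numbers of the image branch with an auxiliary family of curves and matching the new generator against the formula $r_k=\frac{d_1^2}{d_k}-d_{k+1}$ of Lemma~\ref{Bar}.
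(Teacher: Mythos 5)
Your outline identifies the right difficulty (property $(\mathbf 3)$ in positive characteristic, where only the coordinate-line hypothesis can save you), but both parts stop exactly where the real work begins, and in Part 1 the proposed inductive invariant is too weak. You plan to import $(\mathbf 1)$, $(\mathbf 2)$, $(\mathbf 4)$ from the Bresinsky--Angerm\"uller theorem and the Conductor Formula and then prove $(\mathbf 3)$ by induction on the Jung--van der Kulk factorization, claiming that a triangular step ``preserves $(\mathbf 3)$.'' But a triangular automorphism does not extend to $\mathbf P\bK^2$: it moves the point at infinity, multiplies the degree, and changes the entire characteristic sequence $(r_0,\ldots,r_h)$, possibly altering $h$. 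To verify $d_hr_h<r_0^2$ after the step you must know how the whole sequence transforms, not merely that the inequality held before; so the inductive hypothesis has to be a complete description of $G(\gamma)$, at which point $(\mathbf 1)$, $(\mathbf 2)$, $(\mathbf 4)$ come for free and the appeal to the quoted theorems is superfluous. This is exactly what the paper does: from the degree-reduction Lemma~\ref{coro} it extracts a chain $f_1,\ldots,f_{h+1}=f$ with each $(f_k,f_{k+1})$ an automorphism, and Proposition~\ref{pr1} (via B\'ezout applied to consecutive curves and the ultrametric inequality of Lemma~\ref{STI}) yields the closed formula $r_k=\frac{n^2}{d_k}-d_{k+1}$; Lemma~\ref{Bar}(i) then gives all four axioms at once. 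Your Laurent--Puiseux route could be made to work, but the transformation law of the characteristic exponents under an elementary map is precisely the computation you have deferred, and it is the whole content of Part 1.

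In Part 2 your base case is correct and your inductive scheme is morally the paper's construction, but note that ``collapsing two consecutive divisors'' does not produce a truncation of the original sequence: since $r_k=\frac{d_1^2}{d_k}-d_{k+1}$, deleting one $d_j$ changes the neighbouring generator as well, so you must re-verify the Abhyankar--Moh axioms for the shorter chain (easy via Lemma~\ref{Bar}(i), but it should be said), and the ``carefully chosen elementary automorphism'' together with the claim that the new branch picks up \emph{exactly} the missing generator is again the entire content. The paper sidesteps the induction by writing the answer down explicitly, $f_1=y$, $f_2=y^{n_1}-x$, $f_{k+1}=f_k^{n_k}-f_{k-1}$ with $n_k=d_k/d_{k+1}$, and feeding this chain into the same Proposition~\ref{pr1}, so that one computation serves both directions of the theorem. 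To turn your proposal into a proof you would need to supply, in some form, the equivalent of that proposition.
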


\noindent The proof of Theorem \ref{main} is given in Section \ref{tres} of this note.

\begin{nota} If \hbox{\rm char}$\,\bK=0$ then by the famous Abhyankar-Moh theorem every embedded line  is a coordinate line. Determining the semigroups $G(\gamma)$ corresponding to branches $\gamma$ of embedded lines remains an open question if \hbox{\rm char}$\,\bK\neq 0$.
\end{nota}

\begin{ejemplo} [Semigroup in Nagata's example \cite{Nagata}, p. 154]

\noindent Let $\bK$ be a field of characteristic $p>0$ and let $a>1$ be an integer
coprime with $p$. Consider the polynomials $x(t)=t^{p^2}$, $y(t)=t+t^{ap}$. 
Then for $f(x,y)=(y^p-x^a)^p-x$ and $g(x,y)=y-(y^p-x^a)^a$ we have 
$f(x(t),y(t))=0$ and $g(x(t),y(t))=t$ which shows that the affine curve 
$\Gamma$ with equation $f(x,y)=0$ is an embedded line. 

\medskip 
\noindent We compute the semigroup of the branch at infinity  $\gamma$ of $\Gamma$. Let us distinguish two cases:

\medskip\noindent
\textbf{I.} If $a<p$ then the Zariski closure of $\Gamma$ intersects  the line at infinity at  $P=(1:0:0)$. 
We have $r_0=\deg \Gamma=p^2$, $r_1=\ord_P \bar\Gamma=p(p-a)$.
Thus $d_1=p^2$, $d_2=\gcd(r_0,r_1)=p$ and $d_3=1$.  
Substituting these numbers to the conductor formula 
$$ \left(\frac{d_1}{d_2}-1\right)r_1+\left(\frac{d_2}{d_3}-1\right)r_2=(r_0-1)^2 $$
we get $r_2=p^3+p(a-1)-1$.\\ 

\noindent That is $G(\gamma)=p^2\bN+p(p-a)\bN+(p^3+p(a-1)-1)\bN$.

\medskip\noindent
\textbf{II.} If $a>p$ then the Zariski closure of $\Gamma$ intersects  the line at infinity at  $Q=(0:1:0)$.  We have $r_0=\deg \Gamma=ap$, $r_1=\ord_Q \bar\Gamma=p(a-p)$.
Thus $d_1=ap$, $d_2=\gcd(r_0,r_1)=p$ and $d_3=1$.  
Substituting these numbers to the conductor formula we get $r_2=a^2p+p(a-1)-1$.\\

\noindent That is  $G(\gamma)=ap\bN+p(a-p)\bN+(a^2p+p(a-1)-1)\bN$.\\

\noindent In both cases the semigroup $G(\gamma)$ satisfies properties $(\mathbf1)$, $(\mathbf2)$, $(\mathbf4)$ but not $(\mathbf3)$.
\end{ejemplo}

\section{Proof}
\label{tres}

\noindent The following lemma is well-known.

\begin{lema}
\label{l1}
Let $\gamma\neq \lambda$ be plane branches, where $\lambda$  is nonsingular. Let $n=i(\gamma,\lambda)$. Suppose that there exist a characteristic sequence $(r_0,\ldots,r_h)$ with initial term $r_0=n$ and a sequence of branches $(\gamma_1, \ldots, \gamma_{h+1})$, $\gamma_{h+1}=\gamma$ such that
\begin{enumerate}
\item [(1)] $i(\gamma_k,\lambda)=\frac{n}{d_k}$ for $1\leq k\leq h+1$,
\item [(2)] $i(\gamma_k,\gamma_{h+1})=r_k$ for $1\leq k\leq h$.
\end{enumerate}
\noindent Then $G(\gamma)=r_0\bN+\cdots+r_h\bN$.
\end{lema}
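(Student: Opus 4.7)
The containment $r_0\bN + \cdots + r_h\bN \subseteq G(\gamma)$ is immediate from the hypotheses: taking $k=h+1$ in $(1)$ gives $i(\gamma,\lambda)=n/d_{h+1}=n$, so $r_0=n\in G(\gamma)$, and $(2)$ yields $r_k=i(\gamma_k,\gamma)\in G(\gamma)$ for $1\leq k\leq h$; the semigroup property of $G(\gamma)$ does the rest.

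For the reverse inclusion I would invoke the Bresinsky-Angerm\"uller Semigroup Theorem (part 1) applied to the pair $(\gamma,\lambda)$: this produces a characteristic sequence $(s_0,\ldots,s_g)$ with $s_0=i(\gamma,\lambda)=n$ generating $G(\gamma)$. Since a characteristic sequence with prescribed initial term is uniquely determined by the semigroup it generates --- via the recipe $s_k=\min(G(\gamma)\setminus(s_0\bN+\cdots+s_{k-1}\bN))$ recalled in the introduction --- it suffices to prove $(r_0,\ldots,r_h)=(s_0,\ldots,s_g)$.

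Proceed by induction on $k$. The base case $r_0=s_0=n$ is given. Assuming $r_i=s_i$ for $i<k$, the GCDs and partial subsemigroups agree, so $S_{k-1}:=r_0\bN+\cdots+r_{k-1}\bN=s_0\bN+\cdots+s_{k-1}\bN$. The inequality $s_k\leq r_k$ is easy: $r_k\in G(\gamma)$ by the established inclusion, and $r_k\notin S_{k-1}$ because $d_k$ divides every element of $S_{k-1}$ whereas $d_{k+1}=\gcd(d_k,r_k)<d_k$ forces $d_k\nmid r_k$; hence $r_k\in G(\gamma)\setminus S_{k-1}$ and $s_k\leq r_k$ follows.

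The reverse inequality $s_k\geq r_k$, equivalently the statement that every element of $G(\gamma)$ strictly less than $r_k$ already lies in $S_{k-1}$, is the main obstacle. A clean route is a gap-count comparison: for any semigroup $H$ generated by a characteristic sequence there is a closed formula for $\#(\bN\setminus H)$ in terms of $(d_k,r_k)$ (Zariski's delta-invariant formula), and $\#(\bN\setminus G(\gamma))$ is an intrinsic invariant of the branch $\gamma$; matching these numbers and combining with the inclusion $H\subseteq G(\gamma)$ forces equality, which in turn yields $r_k=s_k$ at every level and $h=g$. A more concrete alternative, probably closer to what the authors have in mind given the ``well-known'' label, is to treat the branches $\gamma_k$ as semiroots of $\gamma$ and use a Noether-type expansion of an arbitrary branch $\delta\neq\gamma$ to write $i(\gamma,\delta)$ directly as a nonnegative integer combination of $r_0,\ldots,r_h$.
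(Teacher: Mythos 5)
The paper does not actually prove this lemma; it is quoted as known and attributed to \cite{GBP}, Lemma 5.3, so there is no internal argument to compare against. Judged on its own terms, your proposal establishes only the easy half. The inclusion $r_0\bN+\cdots+r_h\bN\subseteq G(\gamma)$ is correct (and your observation that $d_k\nmid r_k$, hence $r_k\notin r_0\bN+\cdots+r_{k-1}\bN$, giving $s_k\leq r_k$, is also correct). But the reverse inclusion --- equivalently $s_k\geq r_k$ --- is exactly the content of the lemma, and you do not prove it; you only name two possible strategies. Worse, your induction cannot advance: to pass from level $k$ to level $k+1$ you need the equality $s_k=r_k$, not just $s_k\leq r_k$, so the argument stalls already at $k=1$.

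Of the two strategies you sketch, the gap-count route is circular as stated: the formula $\#(\bN\setminus H)=\frac{1}{2}\bigl(\sum_k(n_k-1)r_k-r_0+1\bigr)$ computes the gaps of $H=r_0\bN+\cdots+r_h\bN$ from the $r_k$, and the analogous formula computes $\#(\bN\setminus G(\gamma))$ from the $s_k$; showing these two numbers coincide requires relating $(r_k)$ to $(s_k)$, which is what you are trying to prove. Nothing in hypotheses (1) and (2) gives you an independent computation of $\#(\bN\setminus G(\gamma))$. The second strategy (treating the $\gamma_k$ as semiroots and expanding an arbitrary branch $\delta$ to express $i(\gamma,\delta)$ as a nonnegative combination of the $r_k$) is indeed the standard and workable route --- it is essentially what is done in \cite{GBP} via the ultrametric-type inequality of Lemma \ref{STI} --- but a one-sentence gesture towards it is not a proof. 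To complete the argument you would need to carry out that expansion, typically by induction on $i(\delta,\lambda)$, using the hypothesis $i(\gamma_k,\lambda)=n/d_k$ to control which $\gamma_k$ the branch $\delta$ can be ``divided'' by at each stage.
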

\begin{proof} 
See e.g. \cite{GBP}, Lemma 5.3. 
\end{proof}

\medskip

\noindent Let $\lambda$ be a nonsingular branch. For any branches $\gamma,\gamma'$ different from $\lambda$ we put

\[
d_{\lambda}(\gamma,\gamma')=\frac{i(\gamma,\gamma')}{i(\gamma,\lambda)i(\gamma',\lambda)}.
\]

\begin{lema}
\label{STI}
For any three branches $\gamma, \gamma',\gamma^{''}$ at least two of the numbers $d_{\lambda}(\gamma,\gamma'),d_{\lambda}(\gamma,\gamma^{''}),d_{\lambda}(\gamma',\gamma^{''})$ are equal and the third one is not smaller than the other two.
\end{lema}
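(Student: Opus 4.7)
Plan: The strategy is to translate the claim into an ultrametric inequality for the $x$-adic valuation on Puiseux series. I would choose local coordinates $(x,y)$ at the common centre so that $\lambda = \{x=0\}$, and let $K=\overline{\bK((x))}$ carry its unique extension $v_x$ of the $x$-adic valuation, making $(K,v_x)$ a non-Archimedean valued field. For any branch $\delta\neq\lambda$, the Weierstrass polynomial $f_\delta\in\bK[[x]][y]$ of $\delta$, of degree $n_\delta:=i(\delta,\lambda)$ in $y$, factors over $K$ as $f_\delta=u_\delta\prod_{\beta\in\Sigma(\delta)}(y-\beta)$, where $\Sigma(\delta)\subset K$ is the Galois orbit (of cardinality $n_\delta$) of any Puiseux root of $\delta$. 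Substituting a parametrization of $\delta$ into $f_{\delta'}$ gives the standard formula
\[
d_\lambda(\delta,\delta') \;=\; \frac{i(\delta,\delta')}{n_\delta n_{\delta'}} \;=\; \frac{1}{n_{\delta'}}\sum_{\beta\in\Sigma(\delta')}v_x(\alpha-\beta) \qquad (\alpha\in\Sigma(\delta)\text{ arbitrary}),
\]
exhibiting $d_\lambda$ as the mean $v_x$-contact between Puiseux roots of the two branches.

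For the three given branches $\gamma,\gamma',\gamma''$ I would fix Puiseux roots $\alpha, \alpha', \alpha''$ chosen so as to realise the pairwise maximal $v_x$-contact of the corresponding orbits, and apply the isosceles ultrametric principle to $(\alpha,\alpha',\alpha'')$ in $(K,v_x)$: at least two of the three values $v_x(\alpha-\alpha'),\,v_x(\alpha-\alpha''),\,v_x(\alpha'-\alpha'')$ coincide and equal their minimum. The remaining task is to propagate this equality from the chosen triple to the averages $d_\lambda$. I would do this by stratifying each of $\Sigma(\gamma')$, $\Sigma(\gamma'')$ according to the value of $v_x(\alpha - \cdot)$: the Galois symmetry of $\Sigma(\gamma_j)$ forces the contributions of ``lower-stratum'' Puiseux roots to be identical in each of the three sums, so only the top-stratum roots need to be compared and for those the isosceles equality applied to the fixed triple gives directly the required coincidence.

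The main obstacle is that averaging of non-Archimedean distances does not \emph{a priori} preserve the strong triangle inequality; the argument genuinely depends on the rigid tree-structure of Puiseux orbits for a plane branch --- each $\Sigma(\delta)$ is a Galois-symmetric ``clump'' in the valuative tree whose internal splittings are dictated by the characteristic Puiseux exponents of $\delta$. A short case analysis according to which of $v_x(\alpha-\alpha'),\,v_x(\alpha-\alpha''),\,v_x(\alpha'-\alpha'')$ attain the isosceles minimum, combined with the stratification above, shows that the corresponding two values of $d_\lambda$ coincide and that the third is not smaller, proving the lemma.
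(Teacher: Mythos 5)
The paper does not prove this lemma itself: it refers to \cite{GBP}, Theorem 2.2, where a characteristic-free proof is given using only formal properties of intersection multiplicity, precisely so that Puiseux expansions are never invoked. Your Puiseux-theoretic route has a genuine gap in the setting of this paper, namely an algebraically closed field $\bK$ of \emph{arbitrary} characteristic. When $\mathrm{char}\,\bK=p>0$ the field $\bigcup_n \bK((x^{1/n}))$ is not algebraically closed, so the roots of the Weierstrass polynomial of a branch, taken in $\overline{\bK((x))}$, need not be Puiseux series at all; the polynomial may moreover be inseparable, in which case there is no Galois orbit of cardinality $n_\delta$, and the assertion that the ``internal splittings'' of $\Sigma(\delta)$ are dictated by characteristic exponents is a characteristic-zero fact that breaks down under wild and inseparable ramification. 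Since Lemma \ref{STI} is applied in the proof of Proposition \ref{pr1} with no restriction on $\mathrm{char}\,\bK$ (the paper stresses this generality in the Introduction), this is not a cosmetic issue.

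Even in characteristic zero your sketch leaves the essential step unproved. The isosceles principle for a single triple of roots is immediate, but $d_\lambda$ is an \emph{average} of valuations of differences, and averaging does not preserve ultrametric identities for arbitrary finite subsets of a valued field; the lemma is true only because of the special Galois/tree structure of the orbits, which is exactly what has to be exploited in detail. Two concrete sub-gaps: (i) you cannot in general choose one triple $(\alpha,\alpha',\alpha'')$ realising simultaneously the maximal contact for all three pairs of orbits --- the maximal contact of $\Sigma(\gamma)$ with $\Sigma(\gamma')$ and with $\Sigma(\gamma'')$ may be attained at different elements of $\Sigma(\gamma)$, and transporting one pair by the Galois action disturbs the third; (ii) the claim that ``lower-stratum contributions are identical in each of the three sums'' compares sums taken over different orbits with different base points, and no argument is offered. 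What makes the classical characteristic-zero proof work is the explicit formula expressing $d_\lambda(\delta,\delta')$ as a symmetric, increasing, piecewise-linear function of the maximal contact $\max v_x(\alpha-\beta)$, with slopes governed by the characteristic exponents of the two branches; establishing that formula and its symmetry is the real content, and it is absent from the proposal. As written, this is a plan for the characteristic-zero argument rather than a proof, and it cannot be completed verbatim in positive characteristic; the characteristic-blind argument of \cite{GBP} avoids these difficulties altogether.
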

\begin{proof} See \cite{GBP}, Theorem 2.2. 
\end{proof}

\begin{prop}
\label{pr1}
Let $(f_1,\ldots,f_{h+1})$ be a sequence of polynomials in $\bK[x,y]$ and let $(n_1,\ldots,n_h)$ be a sequence of integers greater than $1$ such that 
\begin{enumerate}
\item $1=\deg f_1<\ldots <\deg f_{h+1}$, 
\item $(f_k,f_{k+1}):\bK^2\longrightarrow \bK^2$ is a polynomial automorphism for $1\leq k\leq h$, 
\item $\deg f_{k+1}=n_k\deg f_k$ for $1\leq k\leq h$.
\end{enumerate}
\noindent Let $d_k=n_k\cdots n_h$ for $1\leq k\leq h$, $d_{h+1}=1$  and let $\Gamma$ be  the affine curve with minimal equation $f_{h+1}=0$, $\gamma$ its branch at infinity. Then 
$G(\gamma)=r_0\bN+\cdots+r_h\bN$, where $r_0=d_1$ and $r_k=\frac{d_1^2}{d_k}-d_{k+1}$ for $1\leq k\leq h$.
\end{prop}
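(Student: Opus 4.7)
The plan is to verify the hypotheses of Lemma~\ref{l1} for the sequence of branches $(\gamma_1,\ldots,\gamma_{h+1})$, where $\gamma_k$ denotes the branch at infinity of $\{f_k=0\}$, and then invoke that lemma directly. Since $d_k/d_{k+1}=n_k>1$ gives both $d_{k+1}\mid d_k$ and $d_1>\cdots>d_{h+1}=1$, Lemma~\ref{Bar}(i) already tells us that $(r_0,\ldots,r_h)$ with $r_k=d_1^2/d_k-d_{k+1}$ is a characteristic sequence whose partial gcds are $\gcd(r_0,\ldots,r_{k-1})=d_k$, with initial term $r_0=d_1=\deg f_{h+1}=:n$.

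The next step is to place all the $\gamma_k$ at a common center. Each $\{f_k=0\}$ is a coordinate line (because $(f_k,f_{k+1})$ is a polynomial automorphism), hence a nonsingular embedded line with one branch $\gamma_k$ at a single point $P_k\in L$. The affine intersection $\{f_k=0\}\cap\{f_{k+1}=0\}$ consists of a single point counted with multiplicity one (preimage of $(0,0)$ under the automorphism), so Bezout in $\mathbf{P}\bK^2$ forces $\deg f_k\cdot\deg f_{k+1}-1>0$ units of intersection at infinity. Since each curve has a unique point at infinity, this compels $P_k=P_{k+1}$, and by iteration all the $\gamma_k$ share a common center $O$. Let $\lambda$ be the branch of $L$ at $O$. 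Condition~(1) of Lemma~\ref{l1} is now immediate: having one branch at infinity gives $i(\gamma_k,\lambda)=\deg f_k=n/d_k$. Localizing the preceding Bezout count at $O$ yields
\[
i(\gamma_k,\gamma_{k+1})=\deg f_k\cdot\deg f_{k+1}-1=\frac{n^2}{d_kd_{k+1}}-1,\qquad d_\lambda(\gamma_k,\gamma_{k+1})=1-\frac{d_kd_{k+1}}{n^2}.
\]

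For condition~(2), $i(\gamma_k,\gamma_{h+1})=r_k$, I argue by downward induction on $k$. The base case $k=h$ is the displayed formula with $d_{h+1}=1$: $i(\gamma_h,\gamma_{h+1})=n^2/d_h-1=r_h$. For the inductive step I apply Lemma~\ref{STI} to the triple $(\gamma_k,\gamma_{k+1},\gamma_{h+1})$. The induction hypothesis gives $d_\lambda(\gamma_{k+1},\gamma_{h+1})=1-d_{k+1}d_{k+2}/n^2$; since $d_kd_{k+1}>d_{k+1}d_{k+2}$ we have $d_\lambda(\gamma_k,\gamma_{k+1})<d_\lambda(\gamma_{k+1},\gamma_{h+1})$. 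Lemma~\ref{STI} forces the two smaller of the three values to coincide, so $d_\lambda(\gamma_k,\gamma_{h+1})=d_\lambda(\gamma_k,\gamma_{k+1})=1-d_kd_{k+1}/n^2$. Multiplying by $i(\gamma_k,\lambda)\,i(\gamma_{h+1},\lambda)=n^2/d_k$ gives $i(\gamma_k,\gamma_{h+1})=n^2/d_k-d_{k+1}=r_k$.

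With both hypotheses of Lemma~\ref{l1} verified, $G(\gamma)=r_0\bN+\cdots+r_h\bN$ follows directly. The main obstacle I anticipate is the geometric bookkeeping of the second paragraph: one must confirm that the affine meeting of $\{f_k=0\}$ and $\{f_{k+1}=0\}$ really has multiplicity one and that the remaining Bezout mass collapses onto the single common point at infinity; once the clean adjacent formula $i(\gamma_k,\gamma_{k+1})=\deg f_k\deg f_{k+1}-1$ is in hand, the strong triangle inequality and Lemma~\ref{l1} do the rest mechanically.
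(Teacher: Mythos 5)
Your proposal is correct and follows essentially the same route as the paper: verify the hypotheses of Lemma~\ref{l1} by computing $i(\gamma_k,\lambda)=\deg f_k$, obtain $i(\gamma_k,\gamma_{k+1})=\frac{n^2}{d_kd_{k+1}}-1$ from B\'ezout plus transversality of the single affine intersection point, and then run the downward induction using Lemma~\ref{STI} on the triple $\gamma_k,\gamma_{k+1},\gamma_{h+1}$. Your write-up is in fact slightly more careful than the paper's in two places where the paper is terse (justifying that all the $\gamma_k$ share a common center at infinity, and invoking Lemma~\ref{Bar}(i) to confirm that $(r_0,\ldots,r_h)$ is a characteristic sequence with the right partial gcds, as Lemma~\ref{l1} requires), and it silently corrects the paper's typo of $\gamma_h$ for $\gamma_{h+1}$ in the triple.
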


\noindent \begin{proof}
Let $\Gamma_k\subseteq \bK^2$ be the affine curve with minimal equation $f_k=0$ and let $\gamma_k$ be the branch at infinity of $\Gamma_k$. In particular $\Gamma_{h+1}=\Gamma$ and $\gamma_{h+1}=\gamma$.
All branches $\gamma_k$, $1\leq k\leq h+1$ are centered at the common point at infinity $O$ of the curves $\Gamma_k$. Let $\lambda$ be the branch of the line at infinity $L$ centered at $O$. Let $n=i(\gamma,\lambda)$. Observe that $n=\deg \Gamma_{h+1}=n_1\cdots n_h=d_1$ and $i(\gamma_k,\lambda)=\deg \Gamma_k=n_1\cdots n_{k-1}=\frac{n}{d_k}$, that is the assumption $(1)$ of Lemma \ref{l1} is satisfied.

\medskip

\noindent Using B\'ezout's theorem to the curves $\overline{\Gamma}_k$, 
$\overline{\Gamma}_{k+1}$ which intersect in exactly one point in $\bK^2$ we get

\begin{equation}
\label{eq1}
i(\gamma_k,\gamma_{k+1})=\frac{n^2}{d_kd_{k+1}}-1
\end{equation}

\noindent since the intersection in $\bK^2$ is transversal. In particular $i(\gamma_h,\gamma_{h+1})=\frac{n^2}{d_hd_{h+1}}-1=\frac{n^2}{d_h}-d_{h+1}=r_h.$

\medskip

\noindent To check the assumption $(2)$ of Lemma \ref{l1} we proceed by descendent induction on $k$.

\noindent Assume that $i(\gamma_h,\gamma_{h+1})=r_h,\cdots, i(\gamma_{k+1},\gamma_{h+1})=r_{k+1}$. 
\noindent By inductive assumption  $d_{\lambda}(\gamma_{k+1},\gamma_{h+1})=1-\frac{d_{k+1}d_{k+2}}{d_1^2}$ and  by (\ref{eq1}) $d_{\lambda}(\gamma_{k+1},\gamma_{k})=1-\frac{d_{k+1}d_{k}}{d_1^2}$.

\noindent Let us consider three branches $\gamma_k, \gamma_{k+1},\gamma_h$. Since $d_{\lambda}(\gamma_{k+1},\gamma_{k})< d_{\lambda}(\gamma_{k+1},\gamma_{h+1})$ we get by Lemma \ref{STI} applied to 
$\gamma_k, \gamma_{k+1},\gamma_h$ that $d_{\lambda}(\gamma_{k},\gamma_{h+1})=d_{\lambda}(\gamma_{k+1},\gamma_{k})$ which implies $i(\gamma_k,\gamma_{h+1})=\frac{d_1^2}{d_k}\left(1-\frac{d_{k+1}d_k}{d_1^2}\right)=r_{k}$.
\end{proof}

\begin{prop}[Van der Kulk]
\label{vdk}
\noindent Let $(f,g):\bK^2\longrightarrow \bK^2$ be a polynomial automorphism. Then either $\deg f$ divides $\deg g$ or $\deg g$ divides $\deg f$.
\end{prop}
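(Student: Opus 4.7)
My plan is to prove the statement by induction on $\deg f + \deg g$, using a leading-form analysis that enables a step-by-step reduction. Set $n := \deg f$ and $m := \deg g$, and (after swapping $f$ and $g$ if needed) assume $n \le m$. The case $n = 1$ is immediate, so I take $n \ge 2$.

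Write $f = f_n + (\text{lower})$ and $g = g_m + (\text{lower})$, with $f_n, g_m$ the homogeneous leading forms. Since $(f, g)$ is an automorphism, $\jac(f, g)$ is a nonzero constant; its top-degree homogeneous component, of degree $n + m - 2 > 0$, equals $\jac(f_n, g_m)$ and must therefore vanish. This forces algebraic dependence of the binary forms $f_n, g_m$, and a short weighted-homogeneity argument yields a relation $f_n^{m/d} = \lambda\, g_m^{n/d}$ with $d := \gcd(n, m)$ and $\lambda \neq 0$. Unique factorization in $\bK[x, y]$ then gives $f_n = c_1 h^{n/d}$ and $g_m = c_2 h^{m/d}$ for some homogeneous $h$ of degree $d$ and constants $c_1, c_2 \neq 0$.

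The key geometric input is that the affine curve $\{f = 0\} \subset \bK^2$ is biregular to $\bK$, because $(f, g)$ maps it isomorphically onto the coordinate axis $\{u = 0\}$ of the target. Hence its projective closure is an irreducible rational curve whose normalization $\mathbf{P}^1$ has a single point lying over the line at infinity $L$. Consequently $\{f = 0\}$ meets $L$ in a single point, which forces $h$ to be (up to scalar) the $d$-th power of a single linear form $\ell$. After a linear change of coordinates I may assume $\ell = x$, so that $f_n = c_1 x^n$ and $g_m = c_2 x^m$.

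Once the leading forms are normalised in this way, the reduction works cleanly when $n \mid m$: post-composing $(f, g)$ with the triangular automorphism $(u, v) \mapsto (u, v - (c_2/c_1^{m/n}) u^{m/n})$ replaces $g$ by $g - (c_2/c_1^{m/n}) f^{m/n}$, whose degree is strictly less than $m$; the new pair is again a polynomial automorphism of smaller total degree, and induction finishes. The main obstacle, and the delicate point of Van der Kulk's original argument, is to exclude the case $n \nmid m$: even with $f_n = c_1 x^n$ and $g_m = c_2 x^m$, no analogous triangular reduction is available unless $n \mid m$. The standard way to resolve this is to analyse a polynomial parametrization $t \mapsto (\phi(t), \psi(t))$ of $\{f = 0\} \cong \bK$ chosen so that $g(\phi(t), \psi(t)) = t$, and to compare degrees in $t$ on both sides of this identity using the normalised forms $f_n, g_m$; the mismatch between $\deg_t t = 1$ and the degree predicted by the leading behaviour of $f, g$ on the parametrization forces $n \mid m$, completing the induction.
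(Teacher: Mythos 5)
The paper does not actually prove this proposition --- it is quoted with a reference to van der Kulk's paper and to \cite{GBP} --- so there is no internal argument to compare against; what matters is whether your sketch stands on its own, and it has two genuine gaps. First, the step ``$\jac(f_n,g_m)=0$ forces algebraic dependence of $f_n,g_m$'' is false in positive characteristic (e.g.\ $\jac(x^p,y)=0$ while $x^p$ and $y$ are algebraically independent), and the paper works over an algebraically closed field of \emph{arbitrary} characteristic. The conclusion you want ($f_n=c_1\ell^{\,n}$, $g_m=c_2\ell^{\,m}$ for a common linear form $\ell$) can be reached characteristic-freely --- each of $\{f=0\}$, $\{g=0\}$ is biregular to $\bK$, hence has a single point at infinity, so each leading form is a power of a linear form; and B\'ezout applied to the two projective closures, which meet in exactly one affine point transversally, forces the two points at infinity to coincide once $nm>1$ --- which is essentially the route taken in Lemma \ref{coro} of the paper. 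But your Jacobian route, as written, does not deliver this in characteristic $p$.

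Second, and more seriously, the entire content of the proposition is the exclusion of the case $n\nmid m$ (with $n\le m$), and that is exactly the part you do not prove. Your induction does no work here: if $n\mid m$ the statement is already established and no reduction is needed, and if $n\nmid m$ you only assert that ``comparing degrees in $t$ along a parametrization of $\{f=0\}$ forces $n\mid m$.'' That assertion is the Abhyankar--Moh epimorphism theorem in disguise, i.e.\ a deep result and not a routine degree count; moreover its naive form genuinely fails for embedded lines in characteristic $p$ (Nagata's example in this very paper has parametrization degrees $p^2$ and $ap$, neither dividing the other), so any correct argument must exploit the automorphism hypothesis in a way your sketch never makes explicit. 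As it stands, the crux of van der Kulk's theorem is assumed rather than proved.
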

\noindent \begin{proof} See \cite{van der Kulk} or \cite{GBP}. \end{proof}

\begin{lema}
\label{coro}
Let $(g,f):\bK^2\longrightarrow \bK^2$ be a polynomial automorphism. If $\deg f>1$ then there exists 
$\tilde{g}$ such that $(\tilde{g},f):\bK^2\longrightarrow \bK^2$ is a polynomial automorphism and $\deg \tilde{g}<\deg f$.
\end{lema}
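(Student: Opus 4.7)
The plan is to lower $\deg g$ by iterated elementary reductions, preserving at each step that the modified pair with $f$ is still an automorphism. By Proposition \ref{vdk} applied to $(g,f)$, either $\deg g<\deg f$, in which case $\tilde g:=g$ already works, or $\deg f$ divides $\deg g$, say $\deg g=n\deg f$ with $n\geq 1$. In the second case I will produce $c\in\bK^{*}$ such that the leading form of $g-cf^{n}$ has degree strictly less than $\deg g$, replace $g$ by $g-cf^{n}$, and iterate.

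Once the leading-form identity $\ini g=c\,(\ini f)^{n}$ is in hand, the reduction is immediate. Put $g':=g-cf^{n}$, so that $\deg g'<\deg g$; since $(u,v)\mapsto(u-cv^{n},v)$ is a polynomial automorphism of $\bK^{2}$, its composition with $(g,f)$, namely $(g',f)$, is again a polynomial automorphism. One either stops, when $\deg g'<\deg f$, or applies Proposition \ref{vdk} to $(g',f)$ (whose second component still has degree $\deg f\geq 1$ and whose first component has degree $\geq\deg f$) to conclude that $\deg f$ again divides $\deg g'$, and repeats. At each iteration the degree of the first component drops strictly and must stay positive, since a pair $(\mathrm{constant},f)$ is never an automorphism; hence the procedure terminates after finitely many steps with some $\tilde g$ satisfying $\deg\tilde g<\deg f$.

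The substantive step I expect to be the main obstacle is the leading-form identity $\ini g=c\,(\ini f)^{n}$. I would derive it from the Jacobian: for any polynomial automorphism $(g,f)$, the Jacobian determinant $J:=g_{x}f_{y}-g_{y}f_{x}$ is a nonzero constant in $\bK$. Its top-degree component is the Jacobian of the leading forms $g_{0}:=\ini g$ and $f_{0}:=\ini f$, a homogeneous polynomial of degree $\deg g+\deg f-2$. Under the present hypotheses $\deg f>1$ and $\deg g\geq\deg f$ this degree is at least $\deg f-1>0$, so the leading-form Jacobian must vanish. Vanishing of the Jacobian of two homogeneous polynomials in $\bK[x,y]$ forces algebraic dependence; since $\bK$ is algebraically closed, factoring $f_{0}$ and $g_{0}$ into linear forms over $\bK$ shows that they are powers, up to scalars, of a common linear form. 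The divisibility $\deg f\mid\deg g$ with quotient $n$ then yields $g_{0}=cf_{0}^{n}$ for some $c\in\bK^{*}$, which is exactly the input needed to run the reduction above.
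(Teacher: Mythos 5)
Your overall scheme --- reduce $\deg g$ by subtracting $cf^{N}$, compose with the elementary automorphism $(u,v)\mapsto (u-cv^{N},v)$, and iterate using Proposition \ref{vdk} at each stage --- is exactly the paper's, and that part of the write-up (including termination) is fine. The gap is in your justification of the leading-form identity $\ini g=c(\ini f)^{N}$. The paper obtains it from van der Kulk's structure theory: each coordinate line has a single point at infinity, so the leading form of each component of a non-linear automorphism is a power of a linear form, and the two linear forms coincide. You replace this by a Jacobian argument, which is valid only in characteristic zero, whereas the paper explicitly works in arbitrary characteristic. It is indeed true in any characteristic that $\jac(g,f)$ is a nonzero constant, hence that the Jacobian of the leading forms $g_{0},f_{0}$ vanishes once $\deg g+\deg f-2>0$; but in characteristic $p>0$ a vanishing Jacobian does not imply algebraic dependence. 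For instance $\jac(x^{p},y)=0$ although $x^{p}$ and $y$ are independent; worse, for the genuine automorphism $(y,\,x+y^{p})$ the leading form $f_{0}=y^{p}$ is a $p$-th power, so $\jac(g_{0},f_{0})=0$ identically for \emph{every} $g_{0}$, and the Jacobian yields no information whatsoever. So in positive characteristic your key step collapses and one must fall back on the geometric input from \cite{van der Kulk} that the paper cites.

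A secondary, repairable inaccuracy: even in characteristic zero, vanishing of $\jac(F,G)$ for homogeneous $F,G$ does not force $F$ and $G$ to be powers of a common linear form (take $F=xy$, $G=x^{2}y^{2}$); what it gives, e.g.\ via Euler's identity and the vanishing of the differential of $F^{\deg G}/G^{\deg F}$, is the proportionality of $F^{\deg G}$ and $G^{\deg F}$. Since $\deg f$ divides $\deg g$, comparing irreducible factorizations in the UFD $\bK[x,y]$ still yields $g_{0}=cf_{0}^{N}$, so this slip costs nothing in characteristic zero. The characteristic-$p$ failure, however, is essential: as written, the proof does not establish the lemma in the generality the paper requires.
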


\noindent \begin{proof} 
\noindent If $\deg g< \deg f$ then we put $\tilde g=g$. Suppose that $\deg g\geq \deg f$. By Proposition \ref{vdk} $N=\frac{\deg g}{\deg f}$ is an integer. Each coordinate line has exactly one point at infinity. Since $(g,f)$ is a non-linear automorphism  the points at infinity of $g=0$ and $f=0$ coincide. Thus we can find a constant $c\in \bK$ such that $\deg (g-cf^N)<\deg g$ (cf. \cite{van der Kulk}, p. 37).
Replace $g$ by $g-cf^N$. Repeating this procedure a finite number of times we get a polynomial 
automorphism $(\tilde g,f):\bK^2\longrightarrow \bK^2$ such that $\deg \tilde g< \deg f$.
\end{proof}

\medskip
\noindent {\bf Proof of Theorem 2.1} \\
\noindent  {$(i)$} Let $\Gamma$ be a coordinate line  with the minimal equation $f=0$ of degree $n>1$. Let $\gamma$ be the branch at infinity of $\Gamma$.

\noindent Using Lemma \ref{coro} we construct a sequence of polynomials $(f_1, \ldots,f_{n+1})$, where $f_{h+1}=f$ such that $(f_k,f_{k+1}):\bK^2\longrightarrow \bK^2$ is a polynomial automorphism for $1\leq k\leq h$ and $\deg f_k< \deg f_{k+1}$. By Proposition \ref{vdk} $\deg f_k$ divides $\deg f_{k+1}$. Let $n_k=\frac{\deg f_{k+1}}{\deg f_k}$ for $1\leq k\leq h$.

\medskip

\noindent  Applying Proposition \ref{pr1} to the sequences $(f_1,\ldots,f_{h+1})$ and $(n_1,\ldots,n_h)$ we get 
$G(\gamma)=r_0\bN+\cdots+r_h\bN$, where $r_0=n$ and $r_k=\frac{n^2}{d_k}-d_{k+1}$ for $1\leq k\leq h$. The sequence $(r_0,\ldots,r_h)$ is an Abhyankar-Moh sequence by Lemma \ref{Bar} $(i)$.

\medskip

\noindent  {$(ii)$} Let $G\subseteq \bN$ be a semigroup generated by an Abhyankar-Moh sequence
$(r_0,\ldots,r_{h})$ with the initial term $r_0=n>1$. Let $d_k=\gcd(r_0,\ldots,r_{k-1})$ for $1\leq k\leq h+1$. Then $r_k=\frac{n^2}{d_k}-d_{k+1}$ by Lemma \ref{Bar} $(ii)$. Let
$n_k=\frac{d_k}{d_{k+1}}$ for $1\leq k\leq h+1$.

\noindent Set 
\[
\begin{array}{l}
f_1=y,\\
f_2=y^{n_1}-x,\\
f_{k+1}=f_k^{n_k}-f_{k-1}\;\;\hbox{\rm for } 2\leq k\leq h.
\end{array}
\]

\noindent Then the  sequences $(f_1, \ldots,f_{n+1})$ and $(n_1,\ldots,n_h)$ satisfy the assumptions of Proposition
 \ref{pr1} and it suffices to take $\Gamma$ as the plane affine curve with minimal equation $f_{h+1}=0$.

\medskip
\noindent
{\small Evelia Rosa Garc\'{\i}a Barroso\\
Departamento de Matem\'aticas, Estad\'{\i}stica e I.O.\\
Secci\'on de Matem\'aticas, Universidad de La Laguna\\
38271 La Laguna, Tenerife, Espa\~na\\
e-mail: ergarcia@ull.es}

\medskip

\noindent {\small  Janusz Gwo\'zdziewicz and Arkadiusz P\l oski\\
Department of Mathematics\\
Kielce University of Technology\\
Al. 1000 L PP7\\
25-314 Kielce, Poland\\
e-mail: matjg@tu.kielce.pl and matap@tu.kielce.pl}
\end{document}